 \def\cocoa{{\hbox{\rm C\kern-.13em o\kern-.07em C\kern-.13em o\kern-.15em A}}}
\newtheorem{theorem}{Theorem}[section]
\newtheorem{question}[theorem]{Question}
\newtheorem{lemma}[theorem]{Lemma}
\newtheorem{proposition}[theorem]{Proposition}
\theoremstyle{definition}
\newtheorem{remark}[theorem]{Remark}
\newtheorem{definition}[theorem]{Definition}
\newcommand {\gr}{\mathrm{gr}}
\newcommand {\Hom}{\mathcal{H}\kern -0.25ex{\mathit om}}
\newcommand {\Spl}{\mathcal{S}\kern -0.25ex{\mathit pl}}
\newcommand {\Ext}{\mathcal{E}\kern -0.25ex{\mathit xt}}
\newcommand {\rk}{\mathrm{rk}}
\newcommand {\ext}{\mathrm{Ext}}
\newcommand {\Hilb}{\mathcal{H}\kern -0.25ex{\mathit ilb\/}}
\newcommand {\Ss}{\mathcal{S}\kern -0.25ex{\mathit s}}
\newcommand {\bZ}{\mathbb{Z}}
\newcommand {\bC}{\mathbb{C}}
\newcommand {\bP}{\mathbb{P}}
\newcommand{\cE}{{\mathcal E}}
\newcommand{\cF}{{\mathcal F}}
\newcommand{\cM}{{\mathcal M}}
\newcommand{\cO}{{\mathcal O}}
\newcommand{\Pic}{\operatorname{Pic}}
\newcommand{\Num}{\operatorname{Num}}
\def\p#1{{\bP^{#1}}}
\def\ga#1{{{\accent"12 #1}}}
\title[Stability of rank $2$ Ulrich bundles on projective $K3$ surface]{Stability of rank $2$ Ulrich bundles\\  on projective $K3$ surface}
\subjclass[2010]{Primary 14J60; Secondary 14J45}
\keywords{Vector bundle, Ulrich bundle, Stable bundle, Moduli space, $K3$ surface}
\author[Gianfranco Casnati, Federica Galluzzi]{Gianfranco Casnati, Federica Galluzzi}
\thanks{The authors are members of GNSAGA group of INdAM and are supported by the framework of PRIN 2010/11 \lq Geometria delle variet{\accent"12 a} algebriche\rq, cofinanced by MIUR}
\begin{document}

\begin{abstract}
Let $F\subseteq\p{N}$ be a $K3$ surface of degree $2a$, where $a\ge2$. In this paper we deal with Ulrich bundles on $F$ of rank $2$. We deal with their stability and we construct $K3$ surfaces  endowed with families of non--special Ulrich bundles of rank $2$ for each $a\ge2$.
\end{abstract}

\maketitle

\section{Introduction and Notation}
Throughout the whole paper, $\p N$ will denote the projective space of dimension $N$ over the complex field $\bC$. 

Each smooth surface $F\subseteq\p N$ is endowed with a polarization $\cO_F(h):=\cO_{\p N}(1)\otimes\cO_F$. A natural problem in the study of the geometry of $F$ is to deal with the vector bundles that it supports. 

Clearly we can restrict our attention to {\sl indecomposable} bundles, i.e. bundles which do not split as a direct sum of bundles of lower rank. From the cohomological viewpoint, the simplest vector bundles are the {\sl arithmetically Cohen--Macaulay} ({\sl aCM} for short) ones, i.e. bundles $\cE$ such that $h^1\big(F,\cE(th)\big)=0$ for $t\in\bZ$. Notice that such a property is trivially invariant up to shifting degrees. Thus we can focus on {\sl initialized} bundles, i.e. bundles $\cE$ such that $h^0\big(F,\cE(-h)\big)=0$ and $h^0\big(F,\cE\big)\ne0$. 

Horrocks' theorem (see \cite{O--S--S} and the references therein) guarantees that $\cO_F$ is the unique initialized, indecomposable, aCM bundle when $F\subseteq\p N$ is a plane. Recall that a closed subscheme $F\subseteq\p N$ is called aCM if it is projectively normal and $\cO_F$ is aCM. A very general result of D. Eisenbud and J. Herzog (see \cite{E--He}) implies that, besides planes, only few other surfaces support at most a finite number of aCM bundles, namely smooth quadrics, smooth rational scrolls of degree up to $4$, the Veronese surface.  

M. Casanellas and R. Hartshorne proved in \cite{C--H1} and \cite{C--H2} that a smooth cubic surface in $\p3$ is endowed with families of arbitrary dimension of non--isomorphic, indecomposable initialized aCM bundles. In order to achieve their results, the authors constructed families of initialized aCM bundles with an extra property. Indeed they are {\sl Ulrich bundles}, i.e. bundles $\cE$ on $F$ whose minimal free resolution as sheaves on $\p3$ is linear. To complete the picture in the case of a cubic surface, we recall that D. Faenzi completely described aCM bundles of ranks $1$ and $2$ in \cite{Fa}.

Some results are known also for quartic surfaces $F\subseteq\p3$. K. Watanabe classified in \cite{Wa} aCM line bundles on $F$, identifying Ulrich line bundles. E. Coskun, R. Kulkarni, Y. Mustopa proved in \cite{C--K--M1} that such an $F$ always supports a family of dimension $14$ of Ulrich bundles of rank $2$ with first Chern class $\cO_F(3h)$. As a consequence of the existence of these bundles one can also infer that $F$ is linear pfaffian, i.e. the quartic polynomial defining $F$ is the pfaffian of a $8\times8$ skew--symmetric matrix with linear entries.

Another almost immediate application of this existence result is that such a surface also  supports families of arbitrary dimension of non--isomorphic, indecomposable Ulrich bundles (see the note \cite{Cs3}). In \cite{C--N} other interesting families of initialized aCM bundles of rank $2$ are constructed: their existence implies that $F$ is quadratic pfaffian, i.e. the quartic polynomial defining $F$ is also the pfaffian of a $4\times4$ skew--symmetric matrix with quadratic entries. Finally, the complete description of initialized aCM bundles of rank $2$ on each linear determinantal smooth quartic surface, i.e. a surface defined by a quartic polynomial which is the determinant of a $4\times4$ matrix with linear entries, is exploited in \cite{Cs2}.
As far as we know there are no other general results for smooth quartic surfaces.

When the codimension increases the picture becomes quickly vague. E.g. even for del Pezzo surfaces only scattered results are known: for this class of surfaces J. Pons Lopis and F. Tonini studied aCM line bundles in \cite{PL--T}, while E. Coskun, R.S. Kulkarni, Y. Mustopa gave in \cite{C--K--M2}, among other results, restrictions on the first Chern class of Ulrich bundles.

Notice that quartic surfaces are a particular case of {\sl $K3$ surfaces}, i.e. smooth regular surfaces $F$ such that $\omega_F\cong\cO_F$. These surfaces can be embedded in $\p{a+1}$ as non--degenerate aCM surfaces of degree $2a$ for some $a\ge2$ (see \cite{SD} for the details).  M. Aprodu, G. Farkas, 
A. Ortega generalized in \cite{A--F--O} the results of \cite{C--K--M1} to this family of $K3$ surfaces $F$, under an extra technical condition. They construct therein families of rank $2$ Ulrich bundles with first Chern class $\cO_F(3h)$: following Proposition 6.2 of \cite{E--S--W}, they call such bundles {\sl special}.  

As we already pointed out, the role of Ulrich bundles is particularly important, hence we ask for further informations on them. For example, it would be interesting to answer the following questions.
\begin{enumerate}[label=(\Alph*)]
\item Are there  restrictions on the Chern classes of Ulrich bundles on a $K3$ surface?
\item Are there other rank $2$ indecomposable Ulrich bundles on a $K3$ surface besides the special ones described in \cite{C--K--M1} and \cite{A--F--O}?
\item Which are the (semi)stability properties of a rank $2$ indecomposable Ulrich bundle?
\end{enumerate}

In this paper we give partial answers to the questions listed above.  E.g., at the end of Section \ref{sUlrich} we prove the following easy proposition answering the first question (see Proposition \ref{pBound}).

\medbreak
\noindent
{\bf Partial answer to Question A.}
{\it Let $\cE$ be an Ulrich bundle of rank $r$ on a $K3$ surface $F\subseteq\p{N}$ of degree $2a$, where $a\ge2$.

Then $c_1(\cE)^2$ is an even integer satisfying
$$
4(a-1)r^2\le c_1(\cE)^2\le\frac{9}{2}ar^2,
$$
where $c_1(\cE)^2\ne\frac{9}{2}ar^2-2$ if $r$ is even. Moreover, $c_1(\cE)^2=\frac{9}{2}ar^2$ if and only if $c_1(\cE)=3rh/2$. 

If $\cE$ is simple, then $(4a-2)r^2-2\le c_1(\cE)^2$.}

\medbreak

We also show that both the above upper and lower bounds are trivially sharp. Notice that simple bundles of rank $r$ are trivially indecomposable: the converse is also true for Ulrich bundles when $r=2$ (see Lemma \ref{lRank2}). Using such an equivalence we are also able to answer the second question in Section \ref{sUlrich2} proving that all the intermediate values which are admissible for simple bundles are actually attained (see Theorem \ref{tExistence}) on suitable $K3$ surfaces. The bundles that we construct are non--special in the sense of \cite{A--F--O}, i.e. their first Chern class is not $\cO_F(3h)$.

\medbreak
\noindent
{\bf Partial answer to Question B.}
{\it Let $a\ge2$. For each choice of an integer $u$ in the range $4a-1\le u\le 5a+4$, $u\ne 5a+3$, there exists a $K3$ surface $F\subseteq\p N$ of degree $2a$ and $\rk(\Pic(F))=3$ supporting an indecomposable Ulrich bundle $\cE$ of rank $2$ with $c_1(\cE)^2=8a-8+2u$ and $c_2(\cE)=u$.}

\medbreak

Notice that when $\rk(\Pic(F))=1$ only the bundles with $u=5a+4$ can actually exist on $F$ and they are exactly the aforementioned special bundles constructed in \cite{C--K--M1} and \cite{A--F--O} (see \cite{Co}: see also \cite{Cs1}). 

Our construction cannot be extended to obtain bundles on a $K3$ surface $F\subseteq\p N$ of degree $2a$ with $\rk(\Pic(F))=2$: in particular we are unfortunately unable to prove or disprove the existence of these bundles on surfaces satisfying $\rk(\Pic(F))=2$. Moreover, as pointed out in Section 2 of \cite{E--S--W}, the Chow forms of the surfaces we use are always linear determinantal. Thus the problem of the existence of non--special rank $2$ Ulrich bundles on $K3$ surfaces whose Chow form is not determinantal remains wide open.

Finally, in Section \ref{sStable}, we answer the third question raised above. Indeed we prove therein the following result (see Theorem \ref{tStable}).

\medbreak
\noindent
{\bf Partial answer to Question C.}
{\it Let $F\subseteq\p{N}$ be a $K3$ surface of degree $2a$, where $a\ge2$.

If $\cE$ is an indecomposable  Ulrich bundle of rank $2$ on $F$ which is  strictly semistable and general in its moduli space, then it fits into an exact sequence of the form
\begin{equation}
\label{seqAB}
0\longrightarrow\cO_F(A)\longrightarrow\cE\longrightarrow\cO_F(B)\longrightarrow0.
\end{equation}
where $\cO_F(A)$ and $\cO_F(B)$ are Ulrich line bundles on $F$ such that $AB=4a-1$.  In particular $c_1(\cE)^2=16a-10$ and $c_2(\cE)=4a-1$.}

\medbreak

It follows that the general Ulrich bundle $\cE$ of rank $2$ with fixed Chern classes $c_1(\cE)$ and $c_2(\cE)$ on a $K3$ surface $F\subseteq\p{N}$ of degree $2a$ is always stable when $c_2(\cE)\ne4a-1$.  Also in this case our answer is partial: indeed we are not able to deal with the stability properties of each general Ulrich bundle $\cE$ such that $c_2(\cE)=4a-1$. 

In Section \ref{sGeneral} we recall the results that we need in the paper on Ulrich bundles. In Section \ref{sQuartic} we summarize several facts about $K3$ surfaces. Section \ref{sUlrich} is devoted to list and inspect some properties of Ulrich bundles on $K3$ surfaces. In Section \ref{sUlrich2} we focus on rank $2$ bundles. Finally, in Section \ref{sStable} we deal with the stability properties of rank 2 Ulrich bundles.

The authors would like to thank A. Knutsen for some helpful suggestions. Particular thanks go to the referee, whose many comments  and interesting suggestions considerably improved the paper.

\section{General results on Ulrich bundles}
\label{sGeneral}

In this section we summarize some general results on Ulrich bundles on a smooth, irreducible, closed subscheme $X\subseteq \p N$. In what follows we will always set $\cO_X(h):=\cO_{\p N}(1)\otimes\cO_X$.

\begin{definition}
Let $X\subseteq \p N$ be a smooth irreducible closed subscheme and let $\cF$ be a vector bundle on $X$.

We say that:
\begin{itemize}
\item $\cF$ is initialized if $h^0\big(X,\cF(-h)\big)=0\ne h^0\big(X,\cF\big)$.
\item $\cF$ is aCM if $h^i\big(X,\cF(th)\big)=0$ for each $t\in \bZ$ and $i=1,\dots,\dim(X)-1$.
\item $\cF$ is Ulrich if $h^i\big(X,\cF(-ih)\big)=h^j\big(X,\cF(-(j+1)h)\big)=0$ for each $i>0$ and $j<\dim(X)$.
\end{itemize}
\end{definition}

Ulrich bundles collect many interesting properties (see Section 2 of \cite{E--S--W}). E.g. they are automatically initialized, aCM and globally generated. Every direct summand of an Ulrich bundle is Ulrich as well. Finally, as already pointed out in the introduction, $\cF$ is Ulrich if and only if it has a linear minimal free resolution over $\p N$. 
Ulrich bundles also behave well with respect to the notions of (semi)stability and $\mu$--(semi)stability. Recall that for each bundle $\cF$ on $X$, the slope $\mu(\cF)$ and the reduced Hilbert polynomial $p_{\cF}(t)$ (with respect to $\cO_X(h)$) are defined as follows:
$$
\mu(\cF)= c_1(\cF)h^{\dim (X)-1}/\rk(\cF), \qquad p_{\cF}(t)=\chi(\cF(th))/\rk(\cF).
$$
The bundle $\cF$ is $\mu$--semistable (resp. $\mu$--stable) if for all subsheaves
$\mathcal G$ with $0<\rk(\mathcal G)<\rk(\cE)$ we have $\mu(\mathcal G) \le \mu(\cE)$ (resp. $\mu(\mathcal G)< \mu(\cE)$).

The bundle $\cE$ is called semistable (resp. stable) if for all $\mathcal G$ as above $p_{\mathcal G}(t) \le  p_{\cE}(t)$ (resp. $p_{\mathcal G}(t) <  p_{\cE}(t)$) for $t\gg0$. We recall that in order to check the semistability and stability of a bundle one can restrict the attention only to subsheaves such that the quotient is torsion--free.

The following chain of implications holds for $\cF$:
$$
\text{$\cF$ is $\mu$--stable}\ \Rightarrow\ \text{$\cF$ is stable}\ \Rightarrow\ \text{$\cF$ is semistable}\ \Rightarrow\ \text{$\cF$ is $\mu$--semistable.}
$$

For the following result see Theorem 2.9 of \cite{C--H2}.

\begin{theorem}
\label{tUnstable}
Let $X\subseteq\p N$ be a smooth, irreducible closed subscheme.

If $\cE$ is an Ulrich bundle on $X$ the following assertions hold.
\begin{enumerate}[label=(\alph*)]
\item $\cE$ is semistable and $\mu$--semistable.
\item $\cE$ is stable if and only if it is $\mu$--stable.
\item If
\begin{equation}
\label{seqUnstable}
0\longrightarrow\mathcal L\longrightarrow\cE\longrightarrow\mathcal M\longrightarrow0
\end{equation}
is an exact sequence of coherent sheaves with $\cM$ torsion free and $\mu(\mathcal L)=\mu(\cE)$, then both $\mathcal L$ and $\cM$ are Ulrich bundles.
\end{enumerate}
\end{theorem}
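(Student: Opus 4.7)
The cornerstone of the argument is the Hilbert polynomial of an Ulrich bundle: from the linear minimal free resolution of $\cE$ over $\bP^N$ one computes
\[
\chi(\cE(th)) = r\,d\,\binom{t+n}{n}, \qquad n := \dim(X),\; d := \deg(X),\; r := \rk(\cE).
\]
Hence the reduced Hilbert polynomial $p_{\cE}(t) = d\binom{t+n}{n}$ is the same for every Ulrich bundle on $X$, independently of its rank. My plan is to prove (c) first and then derive (a) and (b) from it.

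For (c), suppose the sequence (\ref{seqUnstable}) is given with $\cM$ torsion free and $\mu(\mathcal{L}) = \mu(\cE)$; additivity of rank and $c_1$ forces $\mu(\cM) = \mu(\cE)$, while $\mathcal{L}$ is torsion free as a subsheaf of the locally free $\cE$. The inclusion $\mathcal{L}(-h) \hookrightarrow \cE(-h)$ immediately yields $h^0(\mathcal{L}(-h)) = 0$. I would then twist the short exact sequence by $\cO_X(-kh)$ for $k = 1,\dots,n$ and feed it into the long exact cohomology sequence: the Ulrich vanishings $h^{k-1}(\cE(-kh)) = h^k(\cE(-kh)) = 0$ give isomorphisms $h^{k-1}(\cM(-kh)) \cong h^k(\mathcal{L}(-kh))$ together with exact sequences among the neighbouring groups. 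The cleanest way to close the argument is induction on $\dim(X)$: the restriction of an Ulrich bundle to a generic hyperplane section is again Ulrich, and the equality of slopes survives restriction. The base case $n = 1$ reduces to a Riemann--Roch computation on the curve, using $h^0(\mathcal{L}(-h)) = 0$ together with the slope constraint to force each of the required vanishings.

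For (a), I would argue by contradiction using a maximal destabilizing subsheaf $\cG \subset \cE$ with $\cE/\cG$ torsion free and $\mu(\cG) > \mu(\cE)$. Restricting to a generic curve section $C \subset X$ produces a subbundle of the restricted Ulrich bundle $\cE|_C$ of strictly greater slope, contradicting the semistability of $\cE|_C$, itself a direct consequence of $h^0(\cE|_C(-h)) = 0$. Part (b) then follows formally: $\mu$-stability implies stability by comparing leading Hilbert polynomial coefficients; conversely, a stable but not $\mu$-stable $\cE$ would admit a subsheaf of equal slope with torsion-free quotient, and such a subsheaf would be Ulrich by (c), hence share the reduced Hilbert polynomial of $\cE$, contradicting stability.

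The main obstacle I expect is part (c): the hypothesis is only a single linear relation on Chern classes (equality of slopes), whereas the conclusion packages the full list of Ulrich cohomology vanishings for both $\mathcal{L}$ and $\cM$. The induction on $\dim(X)$ is what localizes the difficulty to the one-dimensional case, where the direct effect of $h^0(\mathcal{L}(-h)) = 0$ together with Riemann--Roch closes the argument.
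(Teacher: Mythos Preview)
The paper does not supply its own proof of this theorem: it simply records the statement and cites Theorem 2.9 of \cite{C--H2} (Casanellas--Hartshorne--Geiss--Schreyer). There is therefore nothing in the paper to compare your argument against.

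That said, your outline is broadly in the spirit of the original source. The computation $p_{\cE}(t)=d\binom{t+n}{n}$ and the observation that it is rank--independent is exactly the engine behind (a) and (b) in \cite{C--H2}, and your deduction of (b) from (c) via the shared reduced Hilbert polynomial is the standard argument. For (c), the curve case works as you describe: torsion--free is locally free on a curve, $\chi(\mathcal L(-h))=\rk(\mathcal L)\bigl(\mu(\cE)-d+1-g\bigr)=0$ by Riemann--Roch and the slope hypothesis, and the long exact sequence then forces the vanishings for $\cM$. The one place where your sketch is thin is the inductive step. Restricting $\cE$ to a generic hyperplane section $H$ is fine, but $\mathcal L$ and $\cM$ are only torsion--free, and you need to control how the sequence restricts and then lift the Ulrich vanishings from $\mathcal L\vert_H$, $\cM\vert_H$ back to $\mathcal L$, $\cM$ on $X$; this requires an additional cohomological argument (e.g.\ using the sequence $0\to\cF(-h)\to\cF\to\cF\vert_H\to0$ together with the vanishings already obtained) that you have not spelled out. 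None of this is a genuine obstruction, but it is where the work lies if you intend to reconstruct the proof rather than cite it.
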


We conclude this section with the following helpful result.

\begin{lemma}
\label{lRank2}
Let $X\subseteq\p N$ be a smooth, irreducible closed subscheme with $h^1\big(X,\cO_X\big)=0$.

If $\cE$ is an Ulrich bundle of rank $2$ on $X$, then $\cE$ is simple if and only if it is indecomposable.
\end{lemma}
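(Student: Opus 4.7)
The implication ``simple implies indecomposable'' is immediate: a nontrivial decomposition $\cE\cong\cA\oplus\cB$ produces the projection onto $\cA$ as a non-scalar endomorphism. My plan for the converse is to argue by contradiction, assuming $\cE$ is indecomposable of rank $2$ but admits a non-scalar $\varphi\in\mathrm{End}(\cE)$. Since $X$ is irreducible, $H^{0}(X,\cO_{X})=\bC$, so $\mathrm{tr}(\varphi)$ and $\det(\varphi)$ are scalars and Cayley--Hamilton yields $(\varphi-\lambda_{1}\mathrm{id})(\varphi-\lambda_{2}\mathrm{id})=0$ for some $\lambda_{1},\lambda_{2}\in\bC$. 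If $\lambda_{1}\ne\lambda_{2}$, the standard idempotents $p_{i}=(\varphi-\lambda_{j}\mathrm{id})/(\lambda_{i}-\lambda_{j})$ satisfy $p_{1}+p_{2}=\mathrm{id}$ and $p_{1}p_{2}=0$, and neither vanishes (else $\varphi$ would already be a scalar), giving a nontrivial splitting $\cE=\im(p_{1})\oplus\im(p_{2})$ in contradiction with indecomposability. So I may replace $\varphi$ by $\psi:=\varphi-\lambda\mathrm{id}$ (with $\lambda=\lambda_{1}=\lambda_{2}$) and assume $\psi\ne 0$ with $\psi^{2}=0$.

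I would then unpack the geometry of this nilpotent $\psi$. Because $\psi\ne 0$ and $\im(\psi)\subseteq\ker(\psi)$ in the rank-$2$ bundle $\cE$, both $\ker(\psi)$ and $\im(\psi)$ have rank $1$. Let $\cL\subseteq\cE$ be the saturation of $\ker(\psi)$: as a saturated rank-$1$ subsheaf of a locally free sheaf on the smooth variety $X$, it is reflexive and therefore a line bundle, while $\cM:=\cE/\cL$ is torsion-free of rank $1$. Since $\cL/\ker(\psi)$ is torsion and $\cE$ is torsion-free, $\psi$ vanishes on $\cL$, so it factors through a nonzero map $\widetilde\psi\colon\cM\to\cL$, which is automatically injective because $\cM$ and $\cL$ are both torsion-free of rank $1$.

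The final contradiction comes from a Hilbert polynomial squeeze. By Theorem \ref{tUnstable}(a), $\cE$ is semistable, so the sequence $0\to\cL\to\cE\to\cM\to 0$ (with torsion-free quotient) gives $p_{\cL}(t)\le p_{\cE}(t)\le p_{\cM}(t)$. Conversely, the injection $\widetilde\psi\colon\cM\hookrightarrow\cL$ has torsion cokernel, whose twisted Euler characteristic is nonnegative for $t\gg 0$, forcing $p_{\cM}(t)\le p_{\cL}(t)$. Hence the three reduced Hilbert polynomials coincide, the cokernel of $\widetilde\psi$ vanishes, and $\cM\cong\cL$. Then the extension class of $0\to\cL\to\cE\to\cL\to 0$ lies in $\ext^{1}(\cL,\cL)\cong H^{1}(X,\cO_{X})=0$ by hypothesis, so the sequence splits and $\cE\cong\cL^{\oplus 2}$, contradicting indecomposability. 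The most delicate step will be this Hilbert polynomial squeeze, which jointly exploits the semistability of Ulrich bundles from Theorem \ref{tUnstable}(a) and the vanishing $h^{1}(X,\cO_{X})=0$ supplied by the hypothesis to rule out nontrivial self-extensions.
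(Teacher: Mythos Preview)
Your argument is correct and takes a genuinely different route from the paper's proof. The paper argues by the dichotomy ``$\mu$--stable versus strictly semistable'': if $\cE$ is $\mu$--stable it is simple by the standard fact, and if it is strictly semistable then Theorem~\ref{tUnstable}(c) forces the destabilizing pieces to be Ulrich line bundles $\cO_X(A)$, $\cO_X(B)$; the case $A\cong B$ is dispatched using $h^1(X,\cO_X)=0$, while the case $A\not\cong B$ is handled by citing Proposition~5.3 of \cite{PL--T}, which says that a non--split extension of two non--isomorphic line bundles of this type is simple. You instead analyse a hypothetical non--scalar endomorphism directly: Cayley--Hamilton over $H^0(X,\cO_X)=\bC$ reduces to a nilpotent $\psi$, and then the Hilbert polynomial squeeze uses only semistability (Theorem~\ref{tUnstable}(a)) to force $\cM\cong\cL$, after which $h^1(X,\cO_X)=0$ gives the splitting contradiction. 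Your approach is entirely self--contained (no appeal to \cite{PL--T}), uses only part (a) of Theorem~\ref{tUnstable} rather than the stronger part (c), and in fact proves the more general statement that \emph{any} semistable rank $2$ bundle on a smooth projective $X$ with $h^1(X,\cO_X)=0$ is simple if and only if it is indecomposable. The paper's argument, by contrast, is shorter once the citation is granted and makes explicit the structural fact that the Jordan--H\"older factors of a strictly semistable Ulrich bundle are again Ulrich.
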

\begin{proof}
If $\cE$ is simple, then it is trivially indecomposable. Conversely, assume that $\cE$ is indecomposable. If it is $\mu$--stable, then it is simple (see \cite{H--L}, Corollary 1.2.8). 

Assume that $\cE$ is strictly semistable. In particular $\cE$ fits into Sequence \eqref{seqUnstable} with $\cM$ torsion--free and $\mu(\mathcal L)=\mu(\cE)$. It follows from Theorem \ref{tUnstable} that $\mathcal L\cong\cO_X(A)$ and $\cM\cong\cO_X(B)$ are Ulrich line bundles, $\mu(\cO_X(A))=\mu(\cO_X(B))$. Thus $\cE$ fits into the exact sequence 
\begin{equation*}
\label{seqExtAB}
0\longrightarrow\cO_X(A)\longrightarrow\cE\longrightarrow\cO_X(B)\longrightarrow0.
\end{equation*}
If $\cO_X(A)\cong \cO_X(B)$ the above sequence would split because $h^1\big(X,\cO_X(A-B)\big)=h^1\big(X,\cO_X\big)$ in this case. Thus we can assume $\cO_X(A)\not\cong \cO_X(B)$.

The above sequence splits if and only if it corresponds to $0\in \ext_F^1\big(\cO_X(B),\cO_X(A)\big)$, thus if and only if $\cE$ is not simple, due to Proposition 5.3 of \cite{PL--T}. Since we are assuming that $\cE$ is indecomposable, it follows that $\cE$ is simple.
\end{proof}

\section{General results on $K3$ surfaces}
\label{sQuartic}
We recall some facts on a $K3$ surface $F\subseteq \p N$ with hyperplane line bundle $\cO_F(h)$. They are collected from several places (e.g. see \cite{SD} and \cite{B--H--P--VV}). 

We know that $\omega_F\cong\cO_F$ and $q(F)=0$. In particular $p_a(F)=p_g(F)=1$.
The first important fact is that Serre duality for each locally free sheaf $\cF$ on $F$ becomes
$$
h^i\big(F,\cF\big)=h^{2-i}\big(F,\cF^\vee\big),\qquad i=0,1,2.
$$
Moreover the Riemann--Roch theorem on $F$ is
\begin{equation}
\label{RRGeneral}
h^0\big(F,\cF\big)+h^{2}\big(F,\cF\big)=h^{1}\big(F,\cF\big)+2\rk(\cF)+\frac{c_1(\cF)^2}2-c_2(\cF).
\end{equation}
In particular, if $\cF\cong\cO_F(D)$ for a divisor $D$ with $D^2\ge-2$, then either $D$ or $-D$ is necessarily effective.

If $D$ is an effective non--zero divisor on $F$, then $h^2\big(F,\cO_F(D)\big)=h^0\big(F,\cO_F(-D)\big)=0$. Moreover
$$
h^1\big(F,\cO_F(D)\big)=h^1\big(F,\cO_F(-D)\big)=h^0\big(D,\cO_D\big)-1
$$
(see \cite{SD}, Lemma 2.2). It follows that 
\begin{equation}
\label{h^1}
h^0\big(F,\cO_F(D)\big)=2+\frac {D^2}2,\qquad \deg(D)=Dh,\qquad p_a(D)=1+\frac {D^2}2,
\end{equation}
for each integral divisor $D$ on $F$ (see \cite{SD}, Paragraph 2.4). In particular, the  integral fixed divisors $D$ satisfy $D^2=-2$ and $D\cong\p1$.

We summarize the other helpful results we will need in the following statement.

\begin{proposition}
\label{SD}
Let $F$ be a $K3$ surface.

For each effective divisor $D$ on $F$ such that $\vert D\vert$ has no fixed components the following assertions hold. 
\begin{enumerate}[label=(\alph*)]
\item $D^2\ge0$ and $\cO_F(D)$ is globally generated.
\item If $D^2>0$, then the general element of $\vert D\vert$ is irreducible and smooth: in this case $h^1\big(F,\cO_F(D)\big)=0$.
\item If $D^2=0$, then there is an irreducible divisor $\overline{D}$ with $p_a(\overline{D})=1$ such that $\cO_F(D)\cong\cO_F(e\overline{D})$ where $e-1:=h^1\big(F,\cO_F(D)\big)$: in this case the general element of $\vert D\vert$ is smooth.
\end{enumerate}
\end{proposition}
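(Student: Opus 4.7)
The plan is to follow the classical analysis of Saint-Donat \cite{SD}, making essential use of the triviality of $\omega_F$.

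For part (a), the inequality $D^2\geq 0$ is immediate: two general members of $\vert D\vert$ share no common component by the fixed-component-free hypothesis, so their intersection number $D^2$ is non-negative. For global generation I would separate the two cases. If $D^2=0$, base-point freeness follows directly: any base point $p$ would lie on two distinct general members $D_1,D_2\in\vert D\vert$, forcing $D^2=D_1\cdot D_2\geq 1$, a contradiction. If $D^2>0$, then $D$ is big and nef; Kawamata-Viehweg vanishing combined with $\omega_F\cong\cO_F$ gives $h^1(F,\cO_F(D))=0$, and a standard blow-up argument at a putative base point yields base-point freeness.

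For part (b), once $\cO_F(D)$ is globally generated and $D^2>0$, Bertini's smoothness theorem applied to the morphism $\varphi_{\vert D\vert}\colon F\to\bP^{h^0(F,\cO_F(D))-1}$ — whose image has dimension two because $D$ is big — gives smoothness of the general element of $\vert D\vert$, while connectedness of the general hyperplane section of $\varphi_{\vert D\vert}(F)$ gives irreducibility. The vanishing $h^1(F,\cO_F(D))=0$ was already obtained in the previous step.

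For part (c), if $D^2=0$, the morphism $\varphi_{\vert D\vert}$ has $1$-dimensional image. Its Stein factorization $F\to C\to\varphi_{\vert D\vert}(F)$ produces a fibration $f\colon F\to C$ whose general fibre $\overline{D}$ is an irreducible smooth curve with $\overline{D}^2=0$; adjunction on $F$ (using $\omega_F\cong\cO_F$) gives $p_a(\overline{D})=1$. Writing $D\sim e\overline{D}$, the normal bundle $\cO_F(\overline{D})\vert_{\overline{D}}$ is trivial because $\overline{D}$ is a fibre of $f$, so the short exact sequences
\begin{equation*}
0\longrightarrow\cO_F((i-1)\overline{D})\longrightarrow\cO_F(i\overline{D})\longrightarrow\cO_{\overline{D}}\longrightarrow 0
\end{equation*}
inductively yield $h^0(F,\cO_F(e\overline{D}))=e+1$, and Riemann-Roch \eqref{RRGeneral} (together with $h^2=0$) then forces $h^1(F,\cO_F(D))=e-1$. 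A general member of $\vert D\vert$ is the preimage under $f$ of a general effective divisor of degree $e$ on $C$, that is, a disjoint union of $e$ general fibres of $f$, hence smooth by generic smoothness of $f$.

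The main obstacle is base-point freeness in part (a) when $D^2>0$: although Kawamata-Viehweg vanishing renders it fairly clean, ruling out a putative base point still requires a careful blow-up argument, and the triviality of $\omega_F$ is crucial here. Everything else is a routine combination of Riemann-Roch, adjunction, and the fibration structure produced by $\varphi_{\vert D\vert}$.
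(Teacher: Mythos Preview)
The paper's own proof is simply a citation to Saint--Donat \cite{SD}, Proposition 2.6 and Corollary 3.2, so your sketch is already more detailed than what the paper provides, and it follows the same classical route.

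Two small points deserve tightening. First, in part (a) for $D^2>0$, the phrase ``standard blow-up argument'' is vague and is not how Saint--Donat actually proceeds: blowing up a putative base point $p$ gives $(\pi^*D-E)^2=D^2-1$, but nefness of $\pi^*D-E$ is exactly what is in question, so Kawamata--Viehweg does not apply directly. Saint--Donat instead first shows (via Bertini on the two--dimensional image of $\varphi_{|D|}$) that the general member $C\in|D|$ is irreducible, then uses adjunction $\cO_C(C)\cong\omega_C$ together with the surjection $H^0(F,\cO_F(C))\twoheadrightarrow H^0(C,\omega_C)$ coming from $h^1(F,\cO_F)=0$; global generation of $\omega_C$ for $p_a(C)\ge1$ then gives base-point freeness. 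Second, in part (c) you write $D\sim e\overline{D}$ without saying why all fibres of $f$ are linearly equivalent: this needs the base curve $C$ of the Stein factorisation to be $\bP^1$, which follows from $q(F)=0$. Once that is noted, your inductive computation of $h^0$ and the identification of the general member as a disjoint union of smooth fibres go through as written.
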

\begin{proof}
See \cite{SD}, Proposition 2.6 and Corollary 3.2.
\end{proof}

Let $F\subseteq \p N$ be a $K3$ surface with hyperplane line bundle $\cO_F(h)$ such that $h^2=2a$, where $a\ge2$. It would be interesting to classify all the aCM line bundles on $F$. If $h^2=4$ a complete classification can be found in  \cite{Wa}. A similar classification for double covers of $\p2$ can be found in \cite{Wa2}.

The problem of identifying aCM line bundles on $K3$ surfaces is by no way trivial, as one can check by looking at the quoted papers.

\section{Ulrich bundles on $K3$ surfaces}
\label{sUlrich}
In this section we will prove some general preliminary results about Ulrich bundles on $K3$ surfaces $F\subseteq\p N$ of degree $h^2=2a$, where $a\ge2$, giving a partial answer to the first question raised in the introduction.  

\begin{lemma}
\label{lUlrich}
Let $F\subseteq\p{N}$ be a $K3$ surface of degree $2a$, where $a\ge2$.

The following assertions are equivalent for a vector bundle $\cE$ of rank $r$ on $F$:
\begin{enumerate}[label=(\alph*)]
\item $\cE$ is Ulrich;
\item $\cE^\vee(3h)$ is Ulrich;
\item $\cE$ is aCM and 
\begin{equation}
\label{eqUlrich}
c_1(\cE)h=3ar,\qquad c_2(\cE)=\frac{c_1(\cE)^2}2-2(a-1)r;
\end{equation}
\item $h^0\big(F,\cE(-h)\big)=h^0\big(F,\cE^\vee(2h)\big)=0$ and Equalities \eqref{eqUlrich} hold.
\end{enumerate}
\end{lemma}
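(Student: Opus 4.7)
The plan is to prove (a) $\Leftrightarrow$ (b) directly from Serre duality on $F$ and then to close the cycle (a) $\Rightarrow$ (c) $\Rightarrow$ (d) $\Rightarrow$ (a) using Riemann--Roch. For a surface the Ulrich condition for a bundle $\cF$ amounts to
\[
h^0(F,\cF(-h)) = h^1(F,\cF(-h)) = h^1(F,\cF(-2h)) = h^2(F,\cF(-2h)) = 0.
\]
Since $\omega_F \cong \cO_F$, Serre duality reads $h^i(F,\cG) = h^{2-i}(F,\cG^\vee)$, and writing out the four vanishings that define the Ulrich condition for $\cE^\vee(3h)$ and comparing them term by term with those for $\cE$ shows that each is Serre dual to exactly one of the other four. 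This gives (a) $\Leftrightarrow$ (b) without further work.

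The remaining equivalences all rest on the $K3$ Riemann--Roch formula \eqref{RRGeneral}, which for $\cE(th)$ expands to
\[
\chi\bigl(F,\cE(th)\bigr) = 2r + \frac{c_1(\cE)^2}{2} - c_2(\cE) + t\, c_1(\cE) h + a r t^2.
\]
Throughout I will use the following auxiliary fact, obtained by multiplying by a nonzero section of $\cO_F(h)$: there are injections $\cE(-2h) \hookrightarrow \cE(-h)$ and $\cE^\vee(h) \hookrightarrow \cE^\vee(2h)$, so any vanishing of $h^0$ on the larger twist passes to the smaller one.

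For (a) $\Rightarrow$ (c), $\cE$ is automatically aCM by a general fact on Ulrich bundles recalled in Section \ref{sGeneral}. The auxiliary fact combined with the Ulrich vanishings and Serre duality gives $h^0(\cE(-2h)) = 0$ and $h^0(\cE^\vee(h)) = h^2(\cE(-h)) = 0$; hence $\chi(\cE(-h)) = \chi(\cE(-2h)) = 0$. Expanding these two Euler characteristics via Riemann--Roch yields two linear equations in $c_1(\cE)h$ and $c_2(\cE)$ that are exactly Equalities \eqref{eqUlrich}. For (c) $\Rightarrow$ (d), Equalities \eqref{eqUlrich} force $\chi(\cE(-h)) = \chi(\cE(-2h)) = 0$ via the same formula; the aCM hypothesis then kills the $h^1$ terms, so $h^0(\cE(-h)) + h^2(\cE(-h)) = 0$ and similarly for the $-2h$ twist, and Serre duality identifies these $h^2$'s with the two $h^0$'s in (d). For (d) $\Rightarrow$ (a), the auxiliary fact propagates $h^0(\cE(-h)) = 0$ and $h^0(\cE^\vee(2h)) = 0$ to $h^0(\cE(-2h)) = 0$ and $h^0(\cE^\vee(h)) = 0$; via Serre duality this kills the $h^2$ terms in $\chi(\cE(-h))$ and $\chi(\cE(-2h))$, both of which are $0$ by Riemann--Roch and Equalities \eqref{eqUlrich}, so the $h^1$ terms must vanish as well.

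The whole argument is really a bookkeeping exercise between Serre duality on a $K3$ and Riemann--Roch, so I do not expect any genuine obstacle. The only mildly delicate point, used both in (a) $\Rightarrow$ (c) and in (d) $\Rightarrow$ (a), is the multiplication-by-a-section trick that lets us extract several $h^2$-vanishings from the single hypothesis $h^0(\cE^\vee(2h)) = 0$, and so recover the full Ulrich package in (d) $\Rightarrow$ (a) without having aCM as a hypothesis in advance.
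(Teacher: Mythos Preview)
Your proof is correct and matches the paper's approach in all essentials. The paper outsources (a)$\Leftrightarrow$(b) and (a)$\Leftrightarrow$(c) to \cite{C--K--M2} and only spells out (c)$\Rightarrow$(d) and (d)$\Rightarrow$(a), whereas you give self-contained arguments throughout; but the substance---Serre duality on a $K3$, Riemann--Roch computing $\chi(\cE(-h))=\chi(\cE(-2h))=0$ from \eqref{eqUlrich}, and the multiplication-by-a-section injection to propagate $h^0$-vanishing---is identical to what the paper does and to what underlies the cited results.
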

\begin{proof}
The first and second assertions are equivalent due to \cite{C--K--M2}, Proposition 2.11. The equivalence of the third and first assertions is \cite{C--K--M2}, Propositions 2.10.

We prove that assertion (c) implies assertion (d). In this case $h^1\big(F,\cE(-th)\big)=0$ for $t=1,2$, hence Formula \eqref{RRGeneral} and Equalities \eqref{eqUlrich} imply
\begin{gather*}
h^0\big(F,\cE(-h)\big)\le\chi(\cE(-h))=0,\\
h^0\big(F,\cE^\vee(2h)\big)=h^2\big(F,\cE(-2h)\big)\le\chi(\cE(-2h))=0.
\end{gather*}

Finally, we prove that assertion (d) implies assertion (c). If $h^0\big(F,\cE(-h)\big)=h^0\big(F,\cE^\vee(2h)\big)=0$, then $h^2\big(F,\cE(-2h)\big)=h^0\big(F,\cE^\vee(2h)\big)=0$. We have
\begin{equation*}
\label{Bounds}
\begin{gathered}
h^0\big(F,\cE(-2h)\big)\le h^0\big(F,\cE(-h)\big)\\
h^2\big(F,\cE(-h)\big)=h^0\big(F,\cE^\vee(h)\big)\le h^0\big(F,\cE^\vee(2h)\big)=h^2\big(F,\cE(-2h)\big).
\end{gathered}
\end{equation*}
It follows that $h^1\big(F,\cE(-th)\big)=-\chi(\cE(-th))$,  for $t=1,2$. Formula \eqref{RRGeneral} and Equalities \eqref{eqUlrich} yield $h^1\big(F,\cE(-th)\big)=0$. We deduce that $\cE$ is Ulrich.
\end{proof}

Notice that if $r=1$, then $\cO_F(D)$ is Ulrich if and only if  $D^2=4(a-1)$, $Dh=3a$ and $h^0\big(F,\cO_F(D-h)\big)=h^0\big(F,\cO_F(2h-D)\big)=0$.

\begin{proposition}
\label{pBound}
Let $\cE$ be an Ulrich bundle of rank $r$ on a $K3$ surface $F\subseteq\p{N}$ of degree $2a$, where $a\ge2$.

Then $c_1(\cE)^2$ is an even integer satisfying
$$
4(a-1)r^2\le c_1(\cE)^2\le\frac{9}{2}ar^2,
$$
where $c_1(\cE)^2\ne\frac{9}{2}ar^2-2$ if $r$ is even. Moreover, $c_1(\cE)^2=\frac{9}{2}ar^2$ if and only if $c_1(\cE)=3rh/2$. 

If $\cE$ is simple, then $(4a-2)r^2-2\le c_1(\cE)^2$.\end{proposition}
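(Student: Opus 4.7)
The plan is to combine four standard ingredients: Riemann--Roch on $F$, Bogomolov's inequality (available thanks to the semistability of Ulrich bundles, established in Theorem~\ref{tUnstable}), the Hodge index theorem against the ample class $h$, and a cohomological calculation involving $\Hom(\cE,\cE)$ in the simple case.

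First I would settle the evenness and the lower bound together. Riemann--Roch on a $K3$ surface applied to any line bundle reads $\chi(\cO_F(D))=2+D^2/2$, and the integrality of the Euler characteristic forces $D^2$ to be even; in particular $c_1(\cE)^2$ is an even integer. Next, Theorem~\ref{tUnstable} guarantees that $\cE$ is $\mu$-semistable, so Bogomolov's inequality applies and gives $2rc_2(\cE)-(r-1)c_1(\cE)^2\ge 0$; substituting the expression $c_2(\cE)=c_1(\cE)^2/2-2(a-1)r$ provided by Lemma~\ref{lUlrich} yields at once $c_1(\cE)^2\ge 4(a-1)r^2$.

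For the upper bound I would invoke the Hodge index theorem with respect to $h$, which is ample with $h^2=2a>0$: it gives $(c_1(\cE)h)^2\ge h^2\cdot c_1(\cE)^2$, and since $c_1(\cE)h=3ar$ by Lemma~\ref{lUlrich}, we obtain $c_1(\cE)^2\le 9ar^2/2$, with equality precisely when $c_1(\cE)$ is numerically proportional to $h$. As $\Pic(F)=\NS(F)$ is torsion-free on a $K3$ surface, numerical and linear equivalence coincide, so equality is equivalent to $c_1(\cE)=3rh/2$. To exclude the value $9ar^2/2-2$ when $r$ is even, set $C:=c_1(\cE)-3rh/2\in\NS(F)$, so that $Ch=0$ and $C^2=c_1(\cE)^2-9ar^2/2$; assuming $c_1(\cE)\ne 3rh/2$, hence $C\ne 0$, Hodge index forces $C^2<0$. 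Suppose for contradiction that $C^2=-2$: then Riemann--Roch gives $\chi(\cO_F(C))=1$, and Serre duality on the $K3$ (where $h^2(\cO_F(C))=h^0(\cO_F(-C))$ since $\omega_F\cong\cO_F$) forces either $C$ or $-C$ to be a non-zero effective class. This contradicts $Ch=0$, because any non-zero effective divisor meets the ample class $h$ positively.

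Finally, the simple case uses $\Hom(\cE,\cE)$. If $\cE$ is simple then $h^0(F,\Hom(\cE,\cE))=1$, and Serre duality together with the self-duality $\Hom(\cE,\cE)\cong\Hom(\cE,\cE)^\vee$ induced by the trace pairing give $h^2(F,\Hom(\cE,\cE))=1$, so $\chi(\Hom(\cE,\cE))\le 2$. Riemann--Roch applied to $\Hom(\cE,\cE)$, whose first Chern class vanishes and whose second Chern class equals $2rc_2(\cE)-(r-1)c_1(\cE)^2$, yields $\chi(\Hom(\cE,\cE))=2r^2-\bigl(2rc_2(\cE)-(r-1)c_1(\cE)^2\bigr)$, hence $2rc_2(\cE)-(r-1)c_1(\cE)^2\ge 2r^2-2$; substituting Lemma~\ref{lUlrich} once more delivers $c_1(\cE)^2\ge(4a-2)r^2-2$. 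The most delicate step in this scheme is the $r$-even exclusion, as it is the only place where purely numerical input is insufficient: one genuinely needs to combine the evenness of the intersection form on a $K3$ with the ampleness of $h$ to dispose of the putative $(-2)$-class.
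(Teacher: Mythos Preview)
Your proof is correct and follows essentially the same route as the paper: Bogomolov's inequality for the lower bound, Hodge index for the upper bound and the equality characterisation, and the $(-2)$-class argument (your $C=c_1(\cE)-3rh/2$ is the paper's $D$ up to sign) for the exclusion when $r$ is even. The one place you diverge slightly is the simple case: the paper cites Mukai's dimension formula $\dim\Spl_F(r;c_1,c_2)=c_1(\cE)^2-(4a-2)r^2+2\ge0$, whereas you compute $\chi(\Hom(\cE,\cE))\le2$ directly via Riemann--Roch, which is exactly the computation underlying Mukai's formula and is arguably more self-contained.
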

\begin{proof}
Thanks to Theorem \ref{tUnstable}, $\cE$ is $\mu$--semistable, thus the Bogomolov inequality (see \cite{H--L}, Theorem 3.4.1) holds for $\cE$. Taking into account Equalities \eqref{eqUlrich} we obtain $c_1(\cE)^2-4(a-1)r^2\ge0$, i.e. $4(a-1)r^2\le c_1(\cE)^2$.

Let $\det(\cE)\cong\cO_F(C)$. Trivially we have $Ch=3ar$ and $C^2=c_1^2$. The Hodge index theorem applied to $h$ and $C$ yields $C^2h^2\le (Ch)^2$, i.e. $c_1(\cE)^2\le\frac{9}{2}ar^2$. Moreover, equality holds if and only if $c_1(\cE)=3rh/2$, because $\Num(F)\cong\Pic(F)$ on a $K3$ surface. Finally recall that $c_1(\cE)^2$ is necessarily even.

Assume now that $\cE$ is also simple, thus the coarse moduli space $\Spl_F(r;c_1(\cE),c_2(\cE))$ of rank $r$ simple, vector bundles on $F$ with Chern classes $c_1(\cE)$ and $c_2(\cE)$ has a non--empty component containing $\cE$.  As pointed out in Theorem 0.1 of \cite{Mu}, thanks to Equalities \eqref{eqUlrich}, such a component has dimension $c_1(\cE)^2-(4a-2)r^2+2 \ge 0$, i.e. $(4a-2)r^2-2\le c_1(\cE)^2$.

Let $\cE$ be an Ulrich bundle of even rank $r=2s$ with $c_1(\cE)^2=\frac{9}{2}ar^2-2$. On the one hand, if $\cO_F(D)\cong\det(\cE)^{-1}(3sh)$, then $D^2=-2$, thus either $D$, or $-D$ should be effective. On the other hand $Dh=0$, thus neither $D$, nor $-D$ can be effective due to the ampleness of $\cO_F(h)$, a contradiction.
\end{proof}

Notice that one can deduce the inequality $c_1(\cE)^2\ne\frac{9}{2}ar^2-2$, also by applying directly Theorem 1.1, (iii) of \cite{Knu2}.

\begin{remark}
\label{rLowerUpper}
The bounds above are sharp in many cases. 

Indeed let $F$ support an Ulrich line bundle $D$, so that $D^2=4(a-1)$ and $Dh=3a$  (see Equalities \eqref{eqUlrich}). Thus $\cE:=\cO_F(D)^{\oplus s}$ is an Ulrich bundle of rank $r:=s$ with $c_1(\cE)^2=4(a-1)r^2$. 

Similarly, every general $K3$ surface $F\subseteq\p{N}$ supports an indecomposable Ulrich bundle $\cF$ of rank $2$ with $c_1(\cE)=3h$ (see \cite{A--F--O}, Theorem 0.4). It is easy to check that $\cE:=\cF^{\oplus s}$ is an Ulrich bundle of rank $r:=2s$ with $c_1(\cE)^2=\frac{9}{2}ar^2$.

Obviously, such bundles are not simple, unless $s=1$ and $\cF$ is indecomposable.
\end{remark}

On the one hand, we will see in the next section that the above bound is optimal when $\cE$ is an Ulrich bundle of rank $r=2$. On the other hand when $\cE$ is an Ulrich bundle of odd rank $r$, the upper bound $c_1(\cE)^2\le9r^2$ of Proposition \ref{pBound} is never sharp, because $D^2$ is even for each divisor $D$ on $F$. Actually, for $r=1$, we can certainly say that such a bound is very far from optimality. Thus the following question raises naturally.

\begin{question}
When $r$ is odd, is there any bound which is sharper than the one given in Proposition \ref{pBound}?
\end{question}

\section{Ulrich bundles of rank $2$ on $K3$ surfaces}
\label{sUlrich2}

The second question raised in the introduction is to prove whether indecomposable Ulrich bundles satisfying the above bounds actually exist. We will give below a partial answer to this question, by constructing explicitly $K3$ surfaces endowed with suitable indecomposable (or, equivalently, simple by Lemma \ref{lRank2}) Ulrich bundles of rank $2$.

Let $\cE$ be an Ulrich bundle on $F$ of rank $2$. We have
$$
c_1(\cE)h=6a,\qquad c_2(\cE)=\frac{c_1^2(\cE)}2-4(a-1)
$$
(see Equalities \eqref{eqUlrich}), hence $\mu(\cE)=3a$. It follows from Proposition \ref{pBound} that $c_1(\cE)^2$ is an even integer satisfying
$$
16(a-1)\le c_1(\cE)^2\le18a,\qquad c_1(\cE)^2\ne18a-2.
$$
We already know that both the cases $c_1(\cE)^2=16(a-1)$ and $18a$ occur (see Remark \ref{rLowerUpper}). 

Assume the existence of an Ulrich bundle $\cE$ of rank $2$ with $c_1(\cE)^2=16a-14$ (resp. $16a-12$). Due to Proposition \ref{pBound}, the bundle $\cE$ is not simple, hence decomposable (see Lemma \ref{lRank2}). If $\cE\cong\cO_F(A)\oplus\cO_F(B)$, then $\cO_F(A)$ and $\cO_F(B)$ are both Ulrich. Equalities \eqref{eqUlrich} yield $A^2=B^2=4(a-1)$: the equality $c_1(\cE)^2=16a-14$ (resp. $16a-12$) finally forces $AB=4a-3$ (resp. $AB=4a-2$). 

If $AB=4a-3$, then $(A-B)^2=-2$, hence either $A-B$, or $B-A$ must be effective. Thus such a case cannot occur because $(A-B)h=0$, due to the ampleness of $\cO_F(h)$.

We will now show that also all the other remaining cases occur, in the sense that there is a $K3$ surface $F\subseteq\p N$ of degree $2a$ with Picard number $\rk(\Pic(F))=3$ supporting Ulrich bundles $\cE$ of rank $2$ with $c_1(\cE)$ such that $16a-12\le c_1(\cE)^2\le 18a$. The above discussion shows that if $c_1(\cE)^2=16a-12$, then $\cE\cong\cO_F(A)\oplus\cO_F(B)$ with $AB=4a-2$. 

The next proposition is the first step in this direction.

\begin{proposition}
\label{pExistence}
Let $a\ge2$. For each choice of an integer $u$ in the range $4a-2\le u\le 5a+2$, there exists a  $K3$ surface $F\subseteq\p N$ of degree $2a$ such that $\Pic(F)$ is freely generated by $h$, $A$, $B$, with $AB=u$, where $\cO_F(A)$  and $\cO_F(B)$ are Ulrich line bundles. 
\end{proposition}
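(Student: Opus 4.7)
\medbreak
\noindent
\textbf{Proof plan.}
The plan is to first produce an abstract even lattice $L_u$ of signature $(1,2)$ encoding the required intersection data, then realize it as the Picard lattice of a $K3$ surface $F$ via standard lattice theory, and finally check that the classes $A$ and $B$ built into $L_u$ yield Ulrich line bundles on $F$.

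First I would define $L_u$ as the rank $3$ lattice with ordered basis $h,A,B$ and Gram matrix
\[
\begin{pmatrix} 2a & 3a & 3a \\ 3a & 4a-4 & u \\ 3a & u & 4a-4 \end{pmatrix}.
\]
All diagonal entries are even, so $L_u$ is even. Passing to the basis $h,\ A+B,\ A-B$ makes the Gram matrix block diagonal, with a $2\times 2$ block of determinant $4a(u-5a-4)$ and a $1\times 1$ block equal to $8a-8-2u$; in the range $4a-2\le u\le 5a+2$ both values are strictly negative, so $L_u$ has signature $(1,2)$.

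Next, since $\rk(L_u)=3\le 10$, Nikulin's theorem on primitive embeddings of even lattices yields a primitive embedding $L_u\hookrightarrow\Lambda_{K3}$ into the $K3$ lattice, and by surjectivity of the period map there exists a $K3$ surface $F$ with $\Pic(F)\cong L_u$. Reflecting in $(-2)$-classes of $L_u$ if necessary, one may assume that the class corresponding to $h$ lies in the ample cone of $F$; a short case analysis in $L_u$ should confirm that no $(-2)$-class is orthogonal to $h$ for the values of $u$ in the prescribed range, so $h$ is genuinely ample.

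Finally, by the remark following Lemma \ref{lUlrich}, $\cO_F(A)$ and $\cO_F(B)$ are Ulrich if and only if none of the four classes $A-h$, $2h-A$, $B-h$, $2h-B$ is effective; the numerical Ulrich identities $A^2=B^2=4(a-1)$ and $Ah=Bh=3a$ are built into the Gram matrix. Each such candidate $D$ has $D^2=-4$, so the Riemann--Roch dichotomy on a $K3$ surface (either $D$ or $-D$ is effective whenever $D^2\ge-2$) is not available here, and I expect this non--effectivity check to be the main obstacle. Since no irreducible curve on a $K3$ has self--intersection less than $-2$, an effective representative of $D$ would have to split as a sum of several irreducible components involving at least two $(-2)$-curves; I would classify the $(-2)$-classes in $L_u$ that meet $h$ positively, and then use the Gram matrix together with the intersection numbers $(A-h)h=a$, $(A-h)A=a-4$, $(A-h)B=u-3a$ and their analogues to rule out every possible decomposition of $A-h$ or $2h-A$, with the symmetric conclusions for $B$.
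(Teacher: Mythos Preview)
Your outline follows the same strategy as the paper---build the lattice, realize it as a Picard lattice, move $h$ into the positive cone by reflections, and then verify the Ulrich conditions---but there are two genuine gaps.

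First, you only argue that $h$ is ample, whereas the statement asserts $F\subseteq\p N$, so you need $h$ \emph{very} ample. On a $K3$ surface with $h^2\ge 4$, ampleness is not enough: by Saint--Donat's criterion (as used in the paper via \cite{SD} or \cite{Knu2}) you must in addition exclude effective classes $E$ with $E^2=0$ and $Eh\in\{1,2\}$, and the case $E^2=2$, $h\sim 2E$. You do not address these, and the $(-2)$-class analysis you mention covers only the ampleness part.

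Second, and this is the point that makes both the ampleness/very-ampleness checks and the Ulrich checks tractable, you are missing the key arithmetic observation: for any $E=xA+yB+zh$ one has $Eh=3ax+3ay+2az$, which is always a multiple of $a$. This immediately kills the cases $Eh\in\{1,2\}$ for $a\ge3$, reduces the remaining checks to a pair of explicit quadratic forms in $x,y$ (after eliminating $z$), and---most importantly for the Ulrich step---gives a one-line proof that $A-h$ is not effective. Indeed $(A-h)^2=-4$ and $(A-h)h=a$; any integral component $E$ of an effective $A-h$ would satisfy $E^2\ge-2$ and $0<Eh\le a$ with $a\mid Eh$, forcing $E=A-h$ and contradicting $E^2\ge-2$. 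The same argument handles $2h-A$, $B-h$, $2h-B$. Your proposed route---classifying all $(-2)$-classes with positive $h$-degree and analysing decompositions---is not a priori finite and is in any case unnecessary once you notice the divisibility of $Eh$ by $a$.
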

\begin{proof}
We fix the lattice $\Lambda:=\bZ h\oplus\bZ A\oplus \bZ B$ having 
\begin{equation*}
\label{lattice}
M:=\left(
\begin{array}{ccc}
2a  & 3a  & 3a  \\
3a  & 4(a-1)  &  u \\
3a &  u &   4(a-1)
\end{array}
\right)
\end{equation*}
as its intersection matrix. Such a lattice is even. Moreover, it is an easy exercise to check that it has signature $(1,2)$ in the range $4a-3\le u\le 5a+3$. Theorem 2.9 (i) in \cite{Mor} implies the existence of a projective $K3$ surface $F$ having $\Pic(F)\cong \Lambda$ (see also \cite{Ni}). 

Recall that for each divisor $\Gamma$ with $\Gamma^2=-2$ on $F$ we have the Picard--Lefschetz reflection $\pi_{\Gamma}$ of $\Pic(F)$ defined by $D\mapsto D+(D\Gamma)\Gamma$. If $D'$ is another divisor on $F$ then $\pi_\Gamma(D)\pi_\Gamma(D')=DD'$, because $\Gamma^2=-2$.

As pointed out in Proposition VIII.3.9 of \cite{B--H--P--VV}, the cone of big and nef divisors is a fundamental domain for the group generated by the above reflections. In particular we can find divisors $\Gamma_i$ with $\Gamma_i\Gamma_j=-2\delta_{i,j}$, $i=1,\dots,\gamma$, such that 
$$
{h'}:=h+\sum_{i=1}^\gamma(h\Gamma_i)\Gamma_i
$$
is nef. Let
$$
{A'}:=A+\sum_{i=1}^\gamma(A\Gamma_i)\Gamma_i,\qquad {B'}:=B+\sum_{i=1}^\gamma(B\Gamma_i)\Gamma_i.
$$
Then ${h'}$, ${A'}$, ${B'}$ generate $\Pic(F)$ and they still have $M$ as intersection matrix. Omitting the prime in the superscript we can thus assume that $h$ is nef.

We will now show that $h$ is actually very ample. Since $h^2=2a$ it will follow that the surface $F$ can be embedded as a surface of degree $2a$.

Since $h^2=2a\ge4$, thanks to \cite{SD} (see also Theorem 1.1 of \cite{Knu1} with $k=1$, or \cite{Knu2}, Lemma 2.4), we have to check that there are no effective divisors $E$ on $F$ satisfying anyone of the following conditions:
\begin{itemize}
\item
 $E^2=0$ and $Eh=1,2$;
\item  $E^2=2$ and $\cO_F(h)\cong\cO_F(2E)$;
 \item $E^2=-2$ and $Eh=0$. 
\end{itemize}
Notice that, in any case, if $E\in\vert xA+yB+zh\vert$, then $Eh=3ax+3ay+2az$ is a multiple of $a\ge2$. 

Thus the first case can occur only if $a=2$ and, in this case, $Eh=2$ necessarily, i.e. $2z=1-3x-3y$.  Simple computations show that
$$
E^2=-5x^2-(18-2u)xy-5y^2+1.
$$
Consider the ellipse $5x^2+(18-2u)xy+5y^2=1$, where $4a-2=6\le u\le12=5a+2$. The $x$--coordinate intersection point of the ellipse with the line $y=1$ is a root of the polynomial $5x^2+(18-2u)x+4$. 

The discriminant of this polynomial is $\Delta(u):=u^2-18u+61$ which is an increasing function for $u\ge9$ and symmetric around $u=9$. Thus
$$
u^2-18u+61\le \Delta(12)=-11
$$
in the whole range $6\le u\le 12$. 

We conclude that the line $y=1$ has no points in common with the ellipse. Since the ellipse is symmetric with respect to the origin, it immediately follows that it is strictly contained in the square with vertices $(\pm1,0)$ and $(0,\pm1)$: in particular there are no points with integral coordinates on the ellipse, because it does not pass through the origin.

The second case $E^2=2$ cannot occur because $h$ is an element of a basis of $\Pic(F)$ (see the comments after Lemma 2.4 of \cite{Knu2}). 

Thus we look at the third case $E^2=-2$. In this case equality $Eh=0$ implies $2z=-3x-3y$. Simple computations show that
$$
E^2=-\frac{a+8}2x^2-(9a-2u)xy-\frac{a+8}2y^2.
$$
Consider the ellipse $(a+8)x^2+2(9a-2u)xy+(a+8)y^2=4$, where $a\ge2$ and $4a-2\le u\le 5a+2$. Intersecting with the line $y=1$ we obtain $(a+8)x^2+2(9a-2u)x+a+4=0$. The discriminant $\Delta_a(u):=4u^2-36au+80a^2-12a-32$ is an increasing function for $u\ge9a/2$ and it is symmetric around $u=9a/2$. Thus
$$
4u^2-36au+80a^2-12a-32\le \Delta_a(5a+2)=-4(a+4)<0
$$
in the range $4a-2\le u\le 5a+2$. 

We conclude that $\cO_F(h)$ is very ample, hence it embeds $F$ as a surface of degree $2a$. We will now prove that $\cO_F(A)$ and $\cO_F(B)$ are Ulrich line bundles with respect to such an embedding. We restrict our attention to $\cO_F(A)$ because the argument for $\cO_F(B)$ is similar.

Since $A^2=4(a-1)\ge4$, it follows from Equality \eqref{RRGeneral} that either $\cO_F(A)$ or $\cO_F(-A)$ is effective. Since  $Ah=3a\ge6$ and $h$ is very ample it follows that $\cO_F(A)$ is effective. 

Assume that  $h^0\big(F,\cO_F(A-h)\big)\ne0$ and let $D\in\vert A-h\vert$. The divisor $D$ is a curve of degree $\deg(D)=(A-h)h=a$ such that $D^2=(A-h)^2=-4$, thus there is a proper integral subscheme $E\subseteq D$ with $E^2=-2$. Again let $E\in\vert xA+yB+zh\vert$. The degree of $E$ is $\deg(E)=(xA+yB+zh)h=3ax+3ay+2az$, hence it is a positive multiple of $a$. It follows from the chain of inequalities $a\le \deg(E)\le\deg(D)=a$ that $\deg(E)=\deg(D)$, whence $E=D$, a contradiction. We conclude that $h^0\big(F,\cO_F(A-h)\big)=0$.

Notice that $3h-A$ enjoys the same intersections properties with $h$ as $A$. Thus we can repeat the above discussion showing that $h^0\big(F,\cO_F(2h-A)\big)=0$. We conclude that $\cO_F(A)$ is Ulrich, by applying
Lemma \ref{lUlrich}. 
\end{proof}

\begin{remark}
It is not possible to extend the above proofs to the cases $u=4a-3$ and $u=5a+3$. Indeed, in these cases, $(A-B)h=(3h-A-B)h=0$. Moreover $(A-B)^2=-2$ in the first case, and $(3h-A-B)^2=-2$ in the second, thus $\cO_F(h)$ is not very ample. In these cases $\cO_F(h)$ maps birationally $F$ onto a singular surface. 
\end{remark}

We conclude the section with the following consequence of the above proposition. It shows that the bounds of Proposition \ref{pBound} are actually sharp.

\begin{theorem}
\label{tExistence}
Let $a\ge2$. For each choice of an integer $u$ in the range $4a-1\le u\le 5a+4$, $u\ne5a+3$, there exists a $K3$ surface $F\subseteq\p N$ of degree $2a$ and $\rk(\Pic(F))=3$ supporting an indecomposable Ulrich bundle $\cE$ of rank $2$ with $c_1(\cE)^2=8a-8+2u$ and $c_2(\cE)=u$. 
\end{theorem}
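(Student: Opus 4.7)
The plan is to realize $\cE$ as a non--trivial extension
$$
0\longrightarrow\cO_F(A')\longrightarrow\cE\longrightarrow\cO_F(B')\longrightarrow0
$$
of two suitably chosen Ulrich line bundles on a $K3$ surface produced by Proposition \ref{pExistence}. Any such extension is automatically Ulrich, since the four cohomological vanishings in the definition propagate through the long exact sequence; its Chern classes are $c_1(\cE)=A'+B'$ and $c_2(\cE)=A'B'$; and, provided $\cO_F(A')\not\cong\cO_F(B')$, the argument used in the second half of the proof of Lemma \ref{lRank2} shows that a non--trivial extension class produces a simple, hence indecomposable, bundle.

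For $4a-1\le u\le 5a+2$ I would apply Proposition \ref{pExistence} directly to obtain a $K3$ surface $F$ of degree $2a$ with $\rk(\Pic(F))=3$ freely generated by $h,A,B$ where $\cO_F(A)$ and $\cO_F(B)$ are Ulrich line bundles with $AB=u$, and take $(A',B')=(A,B)$. Non--trivial extensions exist because $(A-B)h=0$ with $A\ne B$ rules out effectiveness of $\pm(A-B)$ by ampleness of $h$, so Riemann--Roch yields $h^1\big(F,\cO_F(A-B)\big)=u-4a+2\ge 1$. A direct computation then gives $c_1(\cE)^2=8a-8+2u$ and $c_2(\cE)=u$.

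The remaining value $u=5a+4$ corresponds by Proposition \ref{pBound} to $c_1(\cE)=3h$, so the only available extension is $(A',B')=(A,3h-A)$ for some Ulrich class $A$; note that $\cO_F(3h-A)$ is automatically Ulrich by the symmetry of the conditions in Lemma \ref{lUlrich} under $D\leftrightarrow 3h-D$. I would still invoke Proposition \ref{pExistence} with any auxiliary value $u'$ in its range to produce $F$ with $\rk(\Pic(F))=3$ and Ulrich generator $\cO_F(A)$; the second Ulrich generator provided by the proposition then serves only to make the Picard rank equal to three. Here $(2A-3h)h=0$ and $(2A-3h)^2=-2a-16<0$, so $h^1\big(F,\cO_F(2A-3h)\big)=a+6>0$ by Riemann--Roch, and non--trivial extensions again exist.

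The hard part will be ruling out, in the presence of the auxiliary generator of $\Pic(F)$, an exotic splitting $\cE\cong\cL_1\oplus\cL_2$ with $\cL_1,\cL_2$ different from the chosen extension summands. This I would handle by combining Atiyah's Krull--Schmidt theorem with an effective cone analysis: any alternative splitting by Ulrich line bundles would force a non--zero effective class of zero $h$--degree in $\Pic(F)$, contradicting the ampleness of $h$; this parallels the argument in the proof of Lemma \ref{lRank2}.
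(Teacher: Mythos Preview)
Your approach for $4a-1\le u\le 5a+2$ is exactly the paper's: apply Proposition~\ref{pExistence}, observe that $(A-B)h=0$ with $A\ne B$ forces $h^0=h^2=0$ for $\cO_F(A-B)$, compute $h^1\big(F,\cO_F(A-B)\big)=u-4a+2\ge1$ by Riemann--Roch, and invoke Lemma~5.3 of \cite{PL--T} to conclude that the resulting extension is simple. For $u=5a+4$ the paper's written proof actually stops at $5a+2$ and relegates the special case $c_1(\cE)=3h$ to Remark~\ref{rTriple}; your treatment via the pair $(A,3h-A)$ on a surface furnished by Proposition~\ref{pExistence} (with any auxiliary $u'$ to force $\rk(\Pic(F))=3$) is precisely that idea, and your computations $(2A-3h)h=0$, $(2A-3h)^2=-2a-16$, $h^1=a+6$ are correct.

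Your last paragraph, however, is unnecessary and rests on a misconception. Once you have established---as you already did in your first paragraph---that a non--split extension of two \emph{non--isomorphic} line bundles on a regular surface is simple, you are done: a simple bundle admits no non--trivial direct sum decomposition whatsoever, ``exotic'' or otherwise. The argument behind Lemma~5.3 of \cite{PL--T} (equivalently the second half of Lemma~\ref{lRank2}) makes no reference to the structure of $\Pic(F)$ beyond the single fact $\cO_F(A')\not\cong\cO_F(B')$; in your $u=5a+4$ case this holds because $2A\ne 3h$ in the lattice of Proposition~\ref{pExistence}, $h$ being part of a basis. There is no need for Krull--Schmidt or an effective--cone analysis; simply delete that paragraph.
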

\begin{proof}
Let $u$ be an integer in the range $4a-1\le u\le 5a+2$ and $F\subseteq\p N$ a  $K3$ surface of degree $2a$ containing divisors $A$ and $B$ with $AB=u$ as in Proposition \ref{pExistence}. Since $u\ne 4(a-1)=A^2=B^2$, it follows that $\cO_F(A)\not\cong\cO_F(B)$, thus the equalities $(A-B)h=(B-A)h=0$ and the ampleness of $\cO_F(h)$ imply
$$
h^0\big(F,\cO_F(A-B)\big)=0,\qquad h^2\big(F,\cO_F(A-B)\big)=h^0\big(F,\cO_F(B-A)\big)=0.
$$
Equality \eqref{RRGeneral} for $\cO_F(A-B)$ implies
$$
h^1\big(F,\cO_F(A-B)\big)=-2-\frac{(A-B)^2}2=AB-4a+2\ge1
$$

We deduce the existence of the non--split sequence
$$
0\longrightarrow\cO_X(A)\longrightarrow\cE\longrightarrow\cO_X(B)\longrightarrow0,
$$
thus $\cE$ is a rank $2$ Ulrich bundle with $c_1(\cE)^2=(A+B)^2=8a-8+2AB$ and $c_2(\cE)=AB$. Since the above sequence corresponds  to a non--zero element of $H^1\big(F,\cO_F(A-B)\big)$, it follows from Lemma 5.3 of \cite{PL--T} that $\cE$ is simple, hence indecomposable. 
\end{proof}

\begin{remark}
\label{rRk=1,2}
The existence of special Ulrich bundles $\cE$ of rank $2$ (i.e. such that $c_1(\cE)=3h$) on each $K3$ surface $F\subseteq\p{N}$ of degree $2a$ follows from \cite{A--F--O}, Theorem 0.4, when $\rk(\Pic(F))=1$. For each bundle of this type $c_1(\cE)^2=8a-8+2u$ and $c_2(\cE)=u$ where $u=5a+4$.

Similarly, Theorem 4.6 of \cite{Knu2} implies the existence of  $K3$ surfaces $F\subseteq\p N$ of degree $2a$ whose Picard group is freely generated by $h$ and by a smooth irreducible curve $A$ with $Ah=3a$ and $A^2=4(a-1)$. The line bundle $\cO_F(A)$ is Ulrich (one can imitate the proof of the analogous statement in Proposition \ref{pExistence}). As in the proof of Theorem \ref{tExistence} we still obtain special Ulrich bundles by extension of $\cO_F(A)$ by $\cO_F(3h-A)$.
\end{remark}

\begin{remark}
\label{rTriple}
Notice that the $K3$ surface $F\subseteq\p{a+1}$ of degree $2a$ defined in Proposition \ref{pExistence} supports, besides the Ulrich line bundles  $\cO_F(A)$ and $\cO_F(B)$ with $AB=u$, at least another Ulrich line bundle, namely $\cO_F(3h-A)$: we have $B(3h-A)=9a-u$. 

In particular for $4a-1\le u \le 9a/2$ there is a  $K3$ surface $F\subseteq\p N$ of degree $2a$ supporting indecomposable Ulrich bundles $\cE$ of rank $2$ with $c_1(\cE)^2=18a, 8a-8+2u, 26a-8-2u$.
\end{remark}

\begin{remark}
\label{rSemistable}
Each Ulrich bundle is semistable (see Theorem \ref{tUnstable}). Trivially the bundle $\cE$ constructed in Theorem \ref{tExistence} is strictly semistable, i.e. is semistable and not stable, because it contains an Ulrich line bundle $\cO_F(A)$ with $\mu(\cO_F(A))=3a=\mu(\cE)$.
\end{remark}

We showed that for each integer $u$ in the range $4a-1\le u\le 5a+4$, $u\ne 5a+3$ there exists a $K3$ surface $F\subseteq\p{a+1}$ of degree $2a$ with $\rk(\Pic(F))=3$ supporting Ulrich bundles $\cE$ of rank $2$ with $c_1(\cE)^2=8a-8+2u$ (see Theorem  \ref{tExistence}). It is obvious that such bundles cannot exist if $\rk(\Pic(F))=1$. We thus ask the following question.

\begin{question}
If $u$ is a fixed integer in the range $4a-1\le u\le 5a+4$, $u\ne 5a+3$, is there a $K3$ surface $F\subseteq\p{N}$ of degree $2a$ with $\rk(\Pic(F))=2$ supporting an Ulrich bundle $\cE$ of rank $2$ with $c_1(\cE)^2=8a-8+2u$?
\end{question}

As we already pointed out in Remark \ref{rRk=1,2}, the above question has an immediate positive answer when $u=5a+4$. 

Moreover, for each integer $u$ in the range $4a-1\le u\le 9a/2$ there exist $K3$ surfaces $F\subseteq\p{a+1}$ of degree $2a$ supporting Ulrich bundles $\cE$ of rank $2$ with $c_1(\cE)^2=18a, 8a-8+2u, 26a-8-2u$ (see Theorem \ref{tExistence} and Remark \ref{rTriple}). It is quite natural to raise the following questions.

\begin{question}
Does there exist a single $K3$ surface $F\subseteq\p{N}$ of degree $2a$ supporting Ulrich bundles $\cE$ of rank $2$ with $c_1(\cE)^2=8a-8+2u$ for each $u$ in the range $4a-1\le u\le 5a+4$, $u\ne 5a+3$?
\end{question}

\begin{question}
If the answer to the previous question is positive, which is the minimal admissible value of $\rk(\Pic(F))$?
\end{question}

\section{Stability of general Ulrich bundles of rank $2$}
\label{sStable}
If $\cE$ is a semistable bundle of rank $2$ with reduced Hilbert polynomial $p(t)$ (with respect to $\cO_F(h)$), then the coarse moduli space $\cM_F^{ss}(p)$ parameterizing $S$--equivalence classes of semistable rank $2$ bundles on $F$ with reduced Hilbert polynomial $p(t)$ is non--empty (see Section 1.5 of \cite{H--L} for the notion of $S$--equivalence). We will denote by $\cM_F^{s}(p)$ the open locus inside $\cM_F^{ss}(p)$ of stable bundles. 

The scheme $\cM_F^{ss}(p)$ is the disjoint union of open and closed subsets $\cM_F^{ss}(2;c_1,c_2)$ whose points represent $S$--equivalence classes of semistable rank $2$ bundles with fixed Chern classes $c_1$ and $c_2$. Similarly $\cM_F^{s}(p)$ is the disjoint union of open and closed subsets $\cM_F^{s}(2;c_1,c_2)$. 

The Grauert semicontinuity theorem for complex spaces (see \cite{B--H--P--VV}) guarantees that the property of being aCM in a family of vector bundles is an open condition. 

Thus, on the one hand, we have open subschemes $\cM_F^{ss,aCM}(2;c_1,c_2)\subseteq \cM_F^{ss}(2;c_1,c_2)$ and $\cM_F^{s,aCM}(2;c_1,c_2)\subseteq \cM_F^{s}(2;c_1,c_2)$ parameterizing respectively $S$--equivalence classes of semistable and stable aCM bundles of rank $2$ on $F$ with Chern classes $c_1$ and $c_2$ (see Section 2 of \cite{C--H2}). 

On the other hand, the locus of aCM bundles $\Spl_F^{aCM}(r;c_1,c_2)$ inside the moduli space $\Spl_F(r;c_1,c_2)$ of simple vector bundles of rank $r$ on $F$ with Chern classes $c_1$ and $c_2$ is open too. If we denote by $\Spl_F^{ns,aCM}(2;c_1,c_2)\subseteq\Spl_F^{aCM}(2;c_1,c_2)$ the locus of simple aCM bundles which are not stable, then $\Spl_F^{aCM}(2;c_1,c_2)\setminus \Spl_F^{ns,aCM}(2;c_1,c_2)$ is an open subset isomorphic to $\cM_F^{s,{aCM}}(2;c_1,c_2)$.

We are interested in dealing with the moduli spaces of aCM bundles of rank $2$ on $F$: in particular we are interested in those ones constructed in the previous section.

\begin{proposition}
\label{pStable}
Let $F\subseteq\p{N}$ be a $K3$ surface of degree $2a$, where $a\ge2$.

If $\cO_F(A)$ and $\cO_F(B)$ are Ulrich line bundles on $F$ such that $4a-1\le AB\le 5a+4$, $AB\ne5a+3$, then the following assertions hold.
\begin{enumerate}[label=(\alph*)]
\item The moduli space $\cM^{ss,{aCM}}_F(2;A+B,AB)$ is non--empty. 
\item If $AB\ge4a$, for each irreducible component $\cM\subseteq \cM^{ss,{aCM}}_F(2;A+B,AB)$, the locus $\cM\cap \cM^{s,{aCM}}_F(2;A+B,AB)$ is smooth and non--empty of dimension $2AB-8a+2$.
\item The points in $\cM^{ss,{aCM}}_F(2;A+B,AB)\setminus\cM^{s,{aCM}}_F(2; A+B,AB)$ are in one--to--one correspondence with the unordered  pairs $\{\ \cO_F(\overline{A}),\cO_F(\overline{B})\ \}$ where $\cO_F(\overline{A})$ and $\cO_F(\overline{B})$ are Ulrich line bundles such that $\cO_F(\overline{A}+\overline{B})\cong\cO_F(A+B)$.
\end{enumerate}
\end{proposition}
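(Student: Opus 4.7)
The plan is to handle (a) and (c) first, then deduce (b) using both (c) and Mukai's theorem on moduli of simple sheaves on a $K3$ surface.

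For (a), Theorem \ref{tExistence} supplies an indecomposable Ulrich bundle $\cE$ of rank $2$ with $c_1(\cE)=A+B$ and $c_2(\cE)=AB$, which is aCM (every Ulrich bundle is) and semistable by Theorem \ref{tUnstable}(a), so it determines a point in $\cM_F^{ss,aCM}(2;A+B,AB)$. For (c), I would mimic the argument in the proof of Lemma \ref{lRank2}: any strictly semistable rank $2$ Ulrich bundle $\cE$ with the given Chern classes fits into a sequence $0\to\cO_F(\bar A)\to\cE\to\cO_F(\bar B)\to 0$ with $\mu(\cO_F(\bar A))=\mu(\cE)$, so Theorem \ref{tUnstable}(c) forces $\cO_F(\bar A)$ and $\cO_F(\bar B)$ to be Ulrich line bundles. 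This is a Jordan--H\"older filtration, hence the $S$-equivalence class of $\cE$ is the class of $\cO_F(\bar A)\oplus\cO_F(\bar B)$. The constraint on $c_1$ yields $\bar A+\bar B\equiv A+B$ in $\Pic(F)$, and because $\bar A^2=\bar B^2=4(a-1)=A^2=B^2$, expanding $(\bar A+\bar B)^2=(A+B)^2$ gives $\bar A\bar B=AB$ automatically, matching $c_2$. Since $\Pic(F)$ is discrete, only finitely many unordered pairs $\{\cO_F(\bar A),\cO_F(\bar B)\}$ arise, giving the claimed bijection.

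For (b), I would invoke Mukai's theorem on moduli of simple sheaves on a $K3$ surface, already used in the proof of Proposition \ref{pBound}: $\Spl_F^{aCM}(2;A+B,AB)$ is smooth and each of its points has local dimension $c_1(\cE)^2-4(4a-2)+2=2AB-8a+2$. The open subscheme $\cM_F^{s,aCM}(2;A+B,AB)\subseteq \Spl_F^{aCM}(2;A+B,AB)$, obtained by removing the closed locus $\Spl_F^{ns,aCM}$, is therefore smooth of the same dimension wherever non-empty. For the non-emptiness on every component of $\cM^{ss,aCM}(2;A+B,AB)$, part (c) shows the strictly semistable locus is a finite set, while pure-dimensionality of $K3$ Gieseker moduli (Mukai, O'Grady, Yoshioka) forces every irreducible component of $\cM^{ss,aCM}(2;A+B,AB)$ to have dimension $2AB-8a+2\ge 2$ when $AB\ge 4a$. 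No component can be swallowed by the finite strictly semistable locus, so each meets $\cM^{s,aCM}$ as required.

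The main technical obstacle is invoking the right pure-dimensionality statement at the strictly semistable points, since Mukai's own theorem addresses only the simple locus. An alternative avoiding this input is to first establish non-emptiness of $\cM^{s,aCM}$ directly by a dimension count inside $\Spl_F^{aCM}$: for each ordered pair $(\bar A,\bar B)$ of Ulrich line bundles with $\bar A+\bar B=A+B$, the non-split extensions of $\cO_F(\bar B)$ by $\cO_F(\bar A)$ form a family of dimension $h^1(\cO_F(\bar A-\bar B))-1=AB-4a+1$ (computed via Riemann--Roch, since $(\bar A-\bar B)h=0$ forces $h^0=h^2=0$), and since $2AB-8a+2-(AB-4a+1)=AB-4a+1>0$ when $AB\ge 4a$, the stable locus $\cM^{s,aCM}$ is non-empty. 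The remaining step, promoting existence in $\Spl_F^{aCM}$ to existence in every component of $\cM^{ss,aCM}$, is precisely where either pure-dimensionality or an explicit deformation of the direct sum $\cO_F(\bar A)\oplus\cO_F(\bar B)$ to a stable bundle is indispensable.
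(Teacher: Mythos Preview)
Your approach is essentially the paper's. For (a) and (c) the arguments coincide: build a non-split extension of $\cO_F(B)$ by $\cO_F(A)$ to get a simple (hence semistable) Ulrich bundle, and read off the Jordan--H\"older graded to identify the strictly semistable $S$-equivalence classes with unordered pairs $\{\cO_F(\bar A),\cO_F(\bar B)\}$. For (b), your ``alternative'' route---the dimension count inside $\Spl_F^{aCM}$ showing that the family of non-split extensions has dimension $h^1(\cO_F(\bar A-\bar B))-1=AB-4a+1<2AB-8a+2$---is exactly what the paper does.

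The gap you flag at the end is smaller than you fear, and the paper closes it without pure-dimensionality results at strictly semistable points. The observation is that every strictly semistable $S$-equivalence class $p\in\cM^{ss,aCM}\setminus\cM^{s,aCM}$ is represented by a \emph{simple} bundle: since $\bar A\bar B=AB>4(a-1)=\bar A^2$ forces $\bar A\not\equiv\bar B$, there exist non-split extensions, and these are simple by Lemma~\ref{lRank2}. Such a simple representative $\cE$ lies in some irreducible component $\cS$ of $\Spl_F^{aCM}$, where Mukai's theorem gives smoothness and dimension $2AB-8a+2$, and the dimension count already shows the stable locus is dense in $\cS$. A one-parameter family in $\cS$ through $\cE$ with generically stable fibre, mapped to $\cM^{ss,aCM}$ via the coarse moduli property, exhibits $p$ in the closure of a stable component. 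Hence no component of $\cM^{ss,aCM}$ can consist solely of strictly semistable points, and (b) follows. No O'Grady--Yoshioka input is needed.

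One small point: your claim that the strictly semistable locus is \emph{finite} is correct, but ``$\Pic(F)$ is discrete'' does not suffice; you need the Hodge index theorem, which makes the intersection form negative definite on $h^\perp$ and hence bounds the lattice points with $\bar Ah=3a$ and $\bar A^2=4(a-1)$. The paper sidesteps this by only asserting that the strictly semistable locus in $\Spl_F^{aCM}$ is a \emph{countable} union of irreducible subschemes of dimension at most $AB-4a+1$, which already suffices for the density argument.
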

\begin{proof}
Notice that Lemma \ref{lUlrich} implies that the points in $\Spl_F^{aCM}(2;A+B,AB)$ actually parameterize rank $2$ simple Ulrich bundles with Chern classes $A+B$ and $AB$.

We have that $\cO_F(A)\ne\cO_F(B)$, because $AB> 4(a-1)=A^2$. The locus $\Spl_F^{ns,aCM}(2;A+B,AB)$ is non--empty, due to Theorem \ref{tExistence} and Remark \ref{rSemistable}, hence $\Spl_F^{aCM}(2;A+B,AB)$ is smooth, non--empty and its dimension is $4c_2-c_1^2-6=2AB-8a+2$ (see \cite{Mu}, Theorem 0.1). 

If $\cE$ is a bundle representing a strictly semistable Ulrich bundle in $\Spl_F^{aCM}(2;A+B,AB)$, then $\cE$ must contain a line bundle $\mathcal L$ such that $\cE/\mathcal L$ is torsion free and $\mu(\mathcal L)=\mu(\cE)$. Theorem \ref{tUnstable} implies that $\cE$ must fit into a sequence of the form
\begin{equation}
\label{seqOverline}
0\longrightarrow\cO_F(\overline{A})\longrightarrow\cE\longrightarrow\cO_F(\overline{B})\longrightarrow0.
\end{equation}
where $\overline{A}$ and $\overline{B}$ are Ulrich line bundles such that $\overline{A}+\overline{B}=A+B$ and $\overline{A}\, \overline{B}=AB$. Thus we obtain a non--zero element of $H^1\big(F,\cO_F(\overline{A}-\overline{B})\big)\cong H^1\big(F,\cO_F(2\overline{A}-A-B)\big)$.

As a first consequence of the above discussion we are able to prove assertions (a) and (b). We start with assertion (a). We see that $\Spl_F^{ns,aCM}(2;A+B,AB)$  is dominated by the union of the projective spaces associated to $H^1\big(F,\cO_F(2\overline{A}-A-B)\big)$ as $\overline{A}$ varies in the subset of Ulrich line bundles inside $\Pic(F)$. Such a set is trivially countable, thus $\Spl_F^{ns,aCM}(2;A+B,AB)$ is a countable union of irreducible subschemes of dimension at most $h^1\big(F,\cO_F(\overline{A}-\overline{B})\big)=AB-4a+1$: in particular, when $AB\ge4a$, it cannot fill any irreducible component $\mathcal S\subseteq \Spl_F^{aCM}(2;A+B,AB)$ (which is smooth of dimension  $4c_2-c_1^2-6=2AB-8a+2$), thus there exist semistable, aCM bundles $\cE$ of rank $2$ with Chern classes $c_1(\cE)=A+B$ and $c_2(\cE)=AB$. In particular assertion (a) is proved.

Now we prove assertion (b). Since $\Spl_F^{aCM}(2;A+B,AB)$ contains $\cM^{s,aCM}_F(2;A+B,AB)$ as an open subset, it follows that $\mathcal S\setminus \Spl_F^{ns,aCM}(2;A+B,AB)$ is an irreducible component of $\cM^{s,aCM}_F(2;A+B,AB)$. We deduce that the $S$--equivalent class of each $\cE\in\mathcal S\cap \Spl_F^{ns,aCM}(2;A+B,AB)$ is actually in the closure of a non--empty component of $\cM^{s,aCM}_F(2;A+B,AB)$. Thus assertion (b) is proved.

As a second consequence of the above discussion we prove assertion (c). Indeed, if $\cE$ fits into Sequence \eqref{seqOverline}, its Jordan--H\"older filtration 
is $0\subseteq\cO_F(\overline{A})\subseteq\cE$, thus the associated graded ring is
$$
\gr(\cE):=\cO_F(\overline{A})\oplus\cE/\cO_F(\overline{A})\cong\cO_F(\overline{A})\oplus\cO_F(\overline{B}).
$$

Let $\cE'$ be in the same $S$--equivalence class of $\cE$. In particular $\cE'$ is strictly semistable, hence it fits into a sequence of the form
$$
0\longrightarrow\cO_F(\overline{A}')\longrightarrow\cE'\longrightarrow\cO_F(\overline{B}')\longrightarrow0,
$$
where $\cO_F(\overline{A}')$, $\cO_F(\overline{B}')$ are Ulrich line bundles which are non--isomorphic. Thus
$$
\cO_F(\overline{A}')\oplus\cO_F(\overline{B}')\cong\gr(\cE')\cong\gr(\cE)\cong\cO_F(\overline{A})\oplus\cO_F(\overline{B}).
$$

We have a non--zero morphism $\cO_F(\overline{A})\to\cO_F(\overline{A}')\oplus\cO_F(\overline{B}')$, thus either $h^0\big(F,\cO_F(\overline{A}'-\overline{A})\big)\ne0$, or $h^0\big(F,\cO_F(\overline{B}'-\overline{A})\big)\ne0$. In the first case, the equality $(\overline{A}'-\overline{A})h=0$ and the ampleness of $\cO_F(h)$ imply $\cO_X(\overline{A}-\overline{A})\cong\cO_X$, hence the above non--zero map induces an isomorphism  $\cO_F(\overline{A})\cong\cO_F(\overline{A}')$, thus 
$$
\cO_F(\overline{B}')\cong\cO_F(A+B-\overline{A}')\cong\cO_F(A+B-\overline{A})\cong\cO_F(\overline{B}),
$$
i.e. 
$\{\ \cO_F(\overline{A}),\cO_F(\overline{B})\ \}=\{\ \cO_F(\overline{A}'),\cO_F(\overline{B}')\ \}$.

An analogous argument holds if $h^0\big(F,\cO_F(\overline{B}'-\overline{A})\big)\ne0$. Thus assertion (c) is proved too.
\end{proof}

The above proposition yields the following theorem (see also Proposition 3.22 of \cite{C--K--M1}).

\begin{theorem}
\label{tStable}
Let $F\subseteq\p{N}$ be a $K3$ surface of degree $2a$, where $a\ge2$.

If $\cE$ is an indecomposable  Ulrich bundle of rank $2$ on $F$ which is  strictly semistable and whose $S$--equivalence class is a general point in its moduli space, then $\cE$ fits into Sequence \eqref{seqAB} where  $\cO_F(A)$ and $\cO_F(B)$ are Ulrich line bundles on $F$ such that $AB=4a-1$. In particular $c_1(\cE)^2=16a-10$ and $c_2(\cE)=4a-1$.
\end{theorem}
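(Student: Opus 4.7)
The plan is to combine Lemma \ref{lRank2}, Theorem \ref{tUnstable}, and the detailed structure of $\cM_F^{ss,aCM}$ supplied by Proposition \ref{pStable} to pin down the integer $u:=AB$.

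First I would record the set-up. Since $\cE$ is indecomposable of rank $2$ on a $K3$ surface, Lemma \ref{lRank2} ensures that $\cE$ is simple. Being strictly semistable, $\cE$ admits a rank $1$ subsheaf $\mathcal L$ of the same slope and with torsion-free quotient, so Theorem \ref{tUnstable}(c) forces both $\mathcal L$ and $\cE/\mathcal L$ to be Ulrich line bundles. Writing $\mathcal L\cong\cO_F(A)$ and $\cE/\mathcal L\cong\cO_F(B)$ realizes $\cE$ as an extension of the form \eqref{seqAB}, and Lemma \ref{lUlrich} gives $A^2=B^2=4(a-1)$, $Ah=Bh=3a$. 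In particular $c_1(\cE)=A+B$, $c_2(\cE)=AB$ and
$$
c_1(\cE)^2=(A+B)^2=8(a-1)+2AB.
$$

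Next I would establish the lower bound $AB\ge 4a-1$. Since $\cE$ is simple, the second statement of Proposition \ref{pBound} applied with $r=2$ yields $c_1(\cE)^2\ge 4(4a-2)-2=16a-10$; substituting in the formula above gives $AB\ge 4a-1$ at once.

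The heart of the argument is the upper bound $AB\le 4a-1$, which I would prove by contradiction. Suppose $AB\ge 4a$. Let $\cM$ denote the irreducible component of $\cM_F^{ss,aCM}(2;A+B,AB)$ containing the $S$-equivalence class $[\cE]$. By Proposition \ref{pStable}(b) the open subset $\cM\cap\cM_F^{s,aCM}(2;A+B,AB)$ is non-empty of (strictly positive) dimension $2AB-8a+2$, so the stable locus is dense in $\cM$. On the other hand Proposition \ref{pStable}(c) identifies the strictly semistable points of $\cM_F^{ss,aCM}(2;A+B,AB)$ with the (necessarily finite) set of unordered pairs of Ulrich line bundles summing to $A+B$, hence this locus is zero-dimensional. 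It follows that the generic point of $\cM$ is stable, contradicting the hypothesis that $[\cE]$ is a general point of its moduli space. Therefore $AB=4a-1$, and substituting gives $c_1(\cE)^2=16a-10$ and $c_2(\cE)=4a-1$, completing the argument.

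The main obstacle is conceptual rather than computational: one has to recognise that in Proposition \ref{pStable} the strictly semistable locus is zero-dimensional as a set of $S$-equivalence classes, while the stable locus is positive-dimensional precisely in the range $AB\ge 4a$. Once this dimension dichotomy is correctly set against the hypothesis ``general in its moduli space'', the value $AB=4a-1$ is forced and the Chern-class formulas drop out by direct substitution.
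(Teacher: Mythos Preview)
Your proof is correct and follows the same route as the paper's: obtain Sequence~\eqref{seqAB} from Theorem~\ref{tUnstable}, then use Proposition~\ref{pStable}(b) to rule out $AB\ge 4a$ because the stable locus would then be a non--empty open (hence dense) subset of the component through $[\cE]$. Two small remarks: to invoke Proposition~\ref{pStable} you should also note the upper bound $AB\le 5a+4$, $AB\ne 5a+3$, which is immediate from Proposition~\ref{pBound}; and the parenthetical ``necessarily finite'' is true but unnecessary, since density of the stable locus already yields the contradiction.
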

\begin{proof}
If $\cE$ is strictly semistable, then it contains a line bundle $\cO_F(A)$ with $\mu(\cO_F(A))=\mu(\cE)=3a$. Trivially $\mu(\cE/\cO_F(A))=3a$: due to Theorem \ref{tUnstable} it follows that $\cO_F(A)$ and $\cE/\cO_F(A)$ are Ulrich line bundles. Then $\cE$ fits into Sequence \eqref{seqAB}.

If $\cE$ is also general in its moduli space, then Proposition \ref{pStable} forces $AB=4a-1$.
\end{proof}

The construction of Section \ref{sUlrich2} yields the existence of semistable Ulrich bundles of rank $2$ with $c_1(\cE)^2=8a-8+2u$ for each integer $u$ in the range $4a-1\le u\le 5a+4$, $u\ne 5a+3$. 

On the one hand, Theorem \ref{tStable} implies that when $u\ge4a$, the general such bundle is actually stable. On the other hand, when $u=4a-1$, Proposition \ref{pStable} implies that the moduli space is a finite set of points and we know that at least one of these points corresponds to a strictly semistable bundle. 

\begin{question}
Are there $K3$ surfaces $F\subseteq\p{N}$ of degree $2a$ supporting a stable Ulrich bundle $\cE$ of rank $2$ with $c_1(\cE)^2=16a-10$?
\end{question}

\bigskip
\noindent
Gianfranco Casnati,\\
Dipartimento di Scienze Matematiche, Politecnico di Torino,\\
c.so Duca degli Abruzzi 24, 10129 Torino, Italy\\
e-mail: {\tt gianfranco.casnati@polito.it}
\bigskip

\noindent
Federica Galluzzi,\\
Dipartimento Matematica, Universit\ga a degli Studi di Torino,\\
via Carlo Alberto 10, 10123 Torino, Italy\\
e-mail: {\tt federica.galluzzi@unito.it}

\end{document}